\newcommand{\R}{\mathbb{R}}
\newcommand{\C}{\mathbb{C}}
\newcommand{\mf}{\mathfrak}
\DeclareMathOperator{\sgn}{sgn}
\newtheorem{theorem}{Theorem}[section]
\newtheorem{def-prop}[theorem]{Definition-Proposition}
\newtheorem{prop}[theorem]{Proposition}
\newtheorem{conj}[theorem]{Conjecture}
\newtheorem{lemma}[theorem]{Lemma}
\theoremstyle{definition}
\theoremstyle{remark}
\newtheorem*{remark}{Remark}
\begin{document}

\title[The Sperner property for the weak order]{A combinatorial $\mathfrak{sl}_2$-action and the Sperner property for the weak order}
\author{Christian Gaetz}
\thanks{C.G. is partially supported by an NSF Graduate Research Fellowship.}
\author{Yibo Gao}
\address{Department of Mathematics, Massachusetts Institute of Technology, Cambridge, MA.}
\email{\href{mailto:gaetz@mit.edu}{gaetz@mit.edu}} 
\email{\href{mailto:gaoyibo@mit.edu}{gaoyibo@mit.edu}}
\date{\today}

\begin{abstract}
We construct a simple combinatorially-defined representation of $\mathfrak{sl}_2$ which respects the order structure of the weak order on the symmetric group.  This is used to prove that the weak order has the strong Sperner property, and is therefore a Peck poset, solving a problem raised by Bj\"orner (1984); a positive answer to this question had been conjectured by Stanley (2017). 
\end{abstract}

\maketitle
\addtocounter{footnote}{1}
\footnotetext{Subject classification (MSC 2010): 06A07, 06A11, 05E18.  Keywords: weak Bruhat order, Sperner, Peck.}
\addtocounter{footnote}{1}
\footnotetext{An extended abstract of this work will appear in the proceedings of FPSAC 2019 \cite{extended-abstract}.}

\section{Introduction} \label{sec:intro}

\subsection{The Sperner property} \label{sec:Sperner}
We refer the reader to \cite{Stanley2012} for basic facts and terminology about posets in what follows.

Let $P$ be a finite ranked poset with rank decomposition
\[
P=P_0 \sqcup \cdots \sqcup P_r.
\]
We say that $P$ is \emph{k-Sperner} if no union of $k$ antichains of $P$ is larger than the union of the largest $k$ ranks.  If $P$ is $k$-Sperner for $k=1,...,r$, we say that $P$ is \emph{strongly Sperner}.  Let $p_i=|P_i|$, then we say $P$ is \emph{rank symmetric} if $p_i=p_{r-i}$ and \emph{rank unimodal} if 
\[
p_0 \leq p_1 \leq \cdots \leq p_{j-1} \leq p_j \geq p_{j+1} \geq \cdots \geq p_r
\]
for some $j$.  If $P$ is rank-symmetric, rank-unimodal, and strongly Sperner, then $P$ is \emph{Peck}. 

The Sperner property has long been of interest in both extremal and algebraic combinatorics.  For example, Sperner's Theorem, which asserts that the Boolean lattice $B_n$ is Sperner, is central to extremal set theory.  In \cite{Stanley1980}, Stanley used the Hard Lefschetz Theorem from algebraic geometry to prove that a large class of posets are strongly Sperner and obtained the Erd\H{o}s-Moser Conjecture as a corollary.  This class includes the strong Bruhat order (see Section \ref{sec:weak-order}), but not the weak order considered here.

\subsection{The weak order} \label{sec:weak-order}
Let $S_n$ denote the symmetric group of permutations of $n$ elements, viewed as a Coxeter group with respect to the simple transpositions $s_i=(i \: i+1)$ for $i=1,...,n-1$.  The \emph{weak order} $W_n=(S_n, \leq)$ is the poset structure on $S_n$ whose cover relations are defined as follows: $u \lessdot w$ if and only if $w=us_i$ for some $i$ and $\ell(w)=\ell(u)+1$, where $\ell$ denotes Coxeter length.  This poset is ranked with the rank of $w$ given by $\ell(w)$; the unique permutation $w_0$ of maximum length has one-line notation $n \: (n-1) \: (n-2)\: ... \: 1$ and length ${n \choose 2}$.

This definition is in contrast to the \emph{strong order} (or \emph{Bruhat order}) on $S_n$ which has cover relations corresponding to right multiplication by any transposition $t_{ij}=(i \: j)$ (still subject to the condition that length increases by one), rather than just the simple transpositions $s_i$; it was proven in \cite{Stanley1980} that the strong order is Peck.  The weak and strong orders share the same ground set and rank structure, so the weak order is rank-symmetric and rank-unimodal.  The Sperner property of $W_n$, however, does not follow from that of the strong order, since $W_n$ has many fewer covering relations.  

Whether $W_n$ is Sperner has been investigated at least since Bj\"orner in 1984 \cite{Bjorner1984}, and a positive answer was conjectured by Stanley \cite{Stanley2017}.  Our main result is a positive answer to this problem:

\begin{theorem} \label{thm:main-theorem}
For all $n \geq 1$ the weak order $W_n$ is strongly Sperner, and therefore Peck.
\end{theorem}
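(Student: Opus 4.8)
The plan is to prove Theorem~\ref{thm:main-theorem} by the linear-algebraic method for Sperner-type theorems introduced by Stanley \cite{Stanley1980} and developed by Proctor. Recall the guiding principle: if $P$ is a finite graded poset of rank $r$ and there is a representation of $\mathfrak{sl}_2$ on the complex vector space $\C P$ with basis $P$ such that the Cartan element $h$ acts on $\C P_i$ as the scalar $2i-r$ and the raising operator $e$ is \emph{order-raising} --- meaning $e(\C P_i)\subseteq\C P_{i+1}$ and the matrix of $e$ in the basis $P$ is supported on pairs $x\lessdot y$ --- then the representation theory of $\mathfrak{sl}_2$ forces $e^{r-2i}\colon\C P_i\to\C P_{r-i}$ to be an isomorphism for every $i\le r/2$, and an order-raising operator with this property forces $P$ to be strongly Sperner. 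Since $W_n$ is already rank-symmetric (via $w\mapsto w_0 w$) and rank-unimodal (its rank numbers are the coefficients of the $q$-factorial $\prod_{i=1}^n(1+q+\cdots+q^{i-1})$, a product of symmetric unimodal polynomials), such a representation on $\C S_n$ --- graded by Coxeter length, so that $r=\binom n2$ --- would immediately give that $W_n$ is Peck. The entire problem is thus reduced to constructing this combinatorial $\mathfrak{sl}_2$-action.

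To build the operators I would first fix the order-raising operator $e$: for each covering relation $w\lessdot ws_i$ of $W_n$ assign an explicit nonzero scalar depending only on the pair of values $w(i),w(i+1)$ transposed in passing from $w$ to $ws_i$, and let $e$ be the corresponding weighted ``multiply on the right by a simple reflection that increases length'' operator. One then needs a companion operator $f$ that decreases length by one --- crucially, $f$ need \emph{not} be order-lowering; it may have matrix entries between elements that are not comparable --- such that $[e,f]$ is the diagonal operator $h$ above. Writing out $[e,f]=h$ produces a system of identities: the diagonal entries must reproduce $2\ell(w)-\binom n2$, and every off-diagonal entry must vanish. The off-diagonal conditions come in two flavors: one for each rank-two interval $[u,w]$ of $W_n$ (of which there are two types, a Boolean $B_2$ and a three-element chain, since $W_n$ is not a lattice and two covers of $u$ need not have a common upper bound of rank $\ell(u)+2$), and one for each ``cherry'' $ws_i\lessdot w\gtrdot ws_{i+1}$ at a common lower neighbor $w$, where the two tips $ws_i$ and $ws_{i+1}$ are mutually incomparable elements of the same rank. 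It is exactly to kill the contributions of these cherries that $f$ must be permitted entries off the Hasse diagram. I expect the weights and the entries of $f$ can be pinned down --- most transparently by an induction on $n$ that builds $(e_n,h_n,f_n)$ on $\C S_n$ from $(e_{n-1},h_{n-1},f_{n-1})$ together with the standard $\mathfrak{sl}_2$-action on an auxiliary $n$-dimensional space, after choosing a coset decomposition $S_n=\bigsqcup_j S_{n-1}\,c_j$ compatible enough with the covering relations of $W_n$ that each piece carries a copy of the inductively constructed structure.

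The main obstacle is precisely the verification that $[e,f]=h$, i.e.\ that the chosen weights (and the chosen off-Hasse entries of $f$, or equivalently the chosen coset decomposition and gluing) actually satisfy all the diagonal and off-diagonal identities simultaneously; this is where every combinatorial feature of the weak order --- the structure of its rank-two intervals, the behavior of descents under right multiplication, and the non-lattice ``cherry'' phenomenon --- has to be controlled, and it is the step that makes the theorem nontrivial (as opposed to the strong order, where the analogous operator comes for free from the geometry of the flag variety). Once this identity is in hand, no further work is needed: $\mathfrak{sl}_2$-theory gives the isomorphisms $e^{r-2i}\colon\C P_i\xrightarrow{\ \sim\ }\C P_{r-i}$, order-raisingness upgrades these to the strong Sperner property for $W_n$, and together with the rank-symmetry and rank-unimodality recalled above this shows $W_n$ is Peck, proving Theorem~\ref{thm:main-theorem}.
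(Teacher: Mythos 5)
Your framework is the right one and matches the paper's: reduce the theorem to constructing a representation of $\mathfrak{sl}_2$ on $\C W_n$ in which $h$ acts on $\C (W_n)_k$ by $2k-\binom n2$, $e$ acts by an order-raising operator, and $f$ is allowed to have entries off the Hasse diagram of the weak order; then highest-weight theory gives the isomorphisms $e^{\binom n2 -2k}\colon \C (W_n)_k \to \C (W_n)_{\binom n2 -k}$ and Proposition~\ref{prop:order-raising} finishes. You also correctly observe that the relations $[h,e]=2e$ and $[h,f]=-2f$ are automatic for any raising/lowering pair, so everything hinges on $[e,f]=h$.

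But that is exactly where your proposal stops being a proof. You never specify the operators: you say the weight on a cover $w\lessdot ws_i$ should depend ``only on the pair of values $w(i),w(i+1)$'' (note the paper's operator, following Stanley, uses the weight $i$, i.e.\ the \emph{position}, not the values), and for $f$ you only say that you ``expect the weights and the entries of $f$ can be pinned down,'' perhaps by an induction on $n$ through a coset decomposition $S_n=\bigsqcup_j S_{n-1}c_j$. No such induction is carried out, no candidate weights are produced, and no verification of $[e,f]=h$ is attempted --- yet you yourself identify this verification as ``the main obstacle'' and ``the step that makes the theorem nontrivial.'' The reduction you do carry out (Peck follows from such an $\mathfrak{sl}_2$-action plus rank symmetry and unimodality) was already known from Stanley and Proctor; the entire content of the theorem is the missing construction. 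For comparison, the paper defines $U\cdot w=\sum_i i\cdot ws_i$ over weak covers and $D\cdot w=\sum c(w,wt_{ij})\cdot wt_{ij}$ over \emph{strong} Bruhat covers with the explicit weights $c(w,wt_{ij})=2\bigl(w_i-w_j-a(w,wt_{ij})\bigr)-1$, and then proves $[U,D]=H$ entry by entry: the diagonal entries by a careful summation/telescoping argument (Lemma~\ref{lem:self}), and the off-diagonal entries by a five-case analysis of the ``diamonds'' formed by one weak cover up and one strong cover down (Lemma~\ref{lem:diamond}) --- not by analyzing rank-two intervals of the weak order alone, as your sketch suggests. Until you exhibit concrete operators and verify the commutator identity, the proposal is a plan for a proof, not a proof.
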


\subsection{Order raising operators} \label{sec:raising}

For $P = P_0 \sqcup \cdots \sqcup P_r$ a finite ranked poset, and $S \subseteq P$, let $\C S$ denote the vector space of formal linear combinations of elements of $S$.  A linear map $U: \C P \to \C P$ sending elements $x \in P$ to $\sum_y c_{xy} y$ is called an \emph{order raising operator} if $c_{xy}=0$ unless $x \lessdot y$.  Any linear map $D: \C P \to \C P$ sending each subspace $\C P_k \to \C P_{k-1}$ is called a \emph{lowering operator}. 

\begin{prop}[Stanley \cite{Stanley1980}]
\label{prop:order-raising}
Suppose there exists an order raising operator $U: \C P \to \C P$ such that if $0 \leq k < \frac{r}{2}$ then $U^{r-2k}: \C P_k \to \C P_{r-k}$ is invertible.  Then $P$ is strongly Sperner.
\end{prop}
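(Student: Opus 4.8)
The plan is to realize $U$ as the raising operator of an $\mf{sl}_2$-action on $\C P$ and to extract the Sperner property from the resulting decomposition into irreducibles.

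First I would unpack the hypothesis. For $k\le j\le r-k$ we may factor $U^{r-2k}=U^{(r-k)-j}\circ U^{j-k}$, so invertibility of $U^{r-2k}\colon\C P_k\to\C P_{r-k}$ forces $U^{j-k}\colon\C P_k\to\C P_j$ to be injective whenever $k\le j$ and $k+j\le r$. Passing to the adjoint $U^{*}$ of $U$ for the inner product in which $P$ is an orthonormal basis, $U^{*}$ is an order-raising operator for the opposite poset $P^{\mathrm{op}}$ satisfying the identical hypothesis (since $(U^{r-2k})^{*}$ is again invertible), so the same reasoning gives that $U^{j-k}\colon\C P_k\to\C P_j$ is surjective whenever $k\le j$ and $k+j\ge r$. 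In particular $U\colon\C P_i\to\C P_{i+1}$ is injective for $i<r/2$ and surjective for $i\ge r/2$, so $p_0\le\cdots\le p_{\lfloor r/2\rfloor}$ and $p_{\lceil r/2\rceil}\ge\cdots\ge p_r$, while $p_k=p_{r-k}$ because $U^{r-2k}$ is an isomorphism; thus $P$ is automatically rank-symmetric and rank-unimodal.

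Next I would produce the lowering operator. Set $e=U$ and let $h$ act on $\C P_i$ as the scalar $2i-r$, so that $[h,e]=2e$; the invertibility hypothesis says precisely that $e$ is a hard Lefschetz element, and the linear-algebra form of the Jacobson--Morozov theorem---equivalently, the classical primitive (Lefschetz) decomposition built up rank by rank, which uses only the injectivity of the powers of $U$ just established---produces a lowering operator $f$ with $[e,f]=h$ and $[h,f]=-2f$. This makes $\C P$ a finite-dimensional $\mf{sl}_2$-module in which $U$ acts as $e$ and $\C P_i$ is the $(2i-r)$-weight space. Decomposing into irreducibles writes $\C P=\bigoplus_m V_m$, where each summand $V_m$ occupies exactly the ranks $d_m,d_m+1,\dots,r-d_m$ with a one-dimensional piece in each, $U$ restricting to isomorphisms $\C(V_m)_i\to\C(V_m)_{i+1}$ for $d_m\le i<r-d_m$ and to zero on the top piece; there are $p_j-p_{j-1}$ of these ``strings'' with bottom rank $j$ for each $0\le j\le r/2$.

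It remains to deduce that $P$ is strongly Sperner. By Mirsky's theorem a subset of $P$ is a union of $k$ antichains exactly when it contains no chain of $k+1$ elements, so it suffices to show that every such $(k+1)$-chain-free $F\subseteq P$ has $|F|\le M_k$, where $M_k:=\sum_m\min(k,\,r-2d_m+1)$; from $p_i=\#\{m:d_m\le i\le r-d_m\}$ one checks, using rank-symmetry and rank-unimodality, that $M_k$ is exactly the sum of the $k$ largest rank sizes. I expect this last step to be the main obstacle, because the splitting $\C P=\bigoplus_m V_m$ is only a direct sum of vector spaces and in general does not refine to a symmetric chain decomposition of the poset, so Dilworth's and Greene--Kleitman's theorems cannot be invoked directly. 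Instead one works with the operator itself, using the elementary fact that a cover relation $x\lessdot y$ can never have both endpoints in a single antichain: when $F$ is a union of $k$ antichains this forces $U$ to carry $\C F$ off of $\C F$ in a controlled way, and combining this with the injectivity and surjectivity of the powers of $U$ from the first step yields (for instance) a linear injection of $\C F$ into the span of the $k$ most central ranks, so that $|F|=\dim\C F\le M_k$. This proves $P$ strongly Sperner; since we have also shown $P$ is rank-symmetric and rank-unimodal, $P$ is in fact Peck.
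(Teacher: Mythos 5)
Your first three steps are sound: the factorization (or adjoint) argument giving injectivity of $U^{j-k}\colon \C P_k \to \C P_j$ below the middle and surjectivity above, the resulting rank symmetry and unimodality, and the construction of an $\mf{sl}_2$-triple from the hard Lefschetz hypothesis together with its string decomposition are all correct (note the paper itself does not reprove this proposition; it quotes it from Stanley \cite{Stanley1980}). But there is a genuine gap exactly where you flag it: the passage from the string decomposition to the bound $|F|\le M_k$ for a $(k+1)$-chain-free set $F$ is not an argument, it is a hope. The assertion that the absence of $(k+1)$-chains ``forces $U$ to carry $\C F$ off of $\C F$ in a controlled way'' and hence ``yields a linear injection of $\C F$ into the span of the $k$ most central ranks'' is unsupported, and its most natural reading --- coordinate projection of $\C F$ onto the middle $k$ ranks --- is false already for $P$ a chain and $k=1$, where an element off the middle rank projects to zero. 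Note also that even the case $k=1$ is not established by your step four: nothing you wrote produces the matching or chain partition needed for the plain Sperner property.

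The standard ways to close the gap are as follows. For $k=1$: injectivity (resp.\ surjectivity) of $U\colon\C P_i\to\C P_{i+1}$ yields, by expanding a nonvanishing maximal minor, a matching of $P_i$ into $P_{i+1}$ (resp.\ of $P_{i+1}$ into $P_i$) along cover relations; gluing these matchings partitions $P$ into saturated chains each meeting a largest rank, so a maximum antichain has size at most $\max_i p_i$. For general $k$, rather than working inside $\C P$ directly, the classical route (Stanley, Proctor \cite{Proctor1982}) is a product trick: by the Mirsky layering you already invoke, the largest union of $k$ antichains in $P$ has the same size as the largest antichain in $P\times C$, where $C$ is a $k$-element chain; the hypothesis transfers to $P\times C$ because tensoring the $\mf{sl}_2$-module $\C P$ with the $k$-dimensional irreducible has order-raising $e$ (namely $U\otimes 1+1\otimes U_C$) and, being again a finite-dimensional $\mf{sl}_2$-module graded by weights, again satisfies the invertibility hypothesis; and the largest rank of $P\times C$ has size equal to the sum of the $k$ largest $p_i$ (this uses the rank symmetry and unimodality of $P$ you established). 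Applying the $k=1$ case to $P\times C$ then finishes the proof. If you prefer to keep your plan of working with the operator itself, you must actually exhibit the claimed injection (for instance via the primitive decomposition together with a triangularity argument), and that is precisely the content currently missing.
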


In \cite{Stanley2017}, Stanley suggested that the order raising operator $U: \C W_n \to \C W_n$ defined for $w \in W_n$ by
\[
U \cdot w = \sum_{i: \: \ell(ws_i)=\ell(w)+1} i \cdot ws_i
\]
and extended by linearity may have the desired property.  He conjectured an explicit non-vanishing product formula for the determinants of the maps $U^{{n \choose 2}-2k}: \C (W_n)_k \to \C (W_n)_{{n \choose 2} -k}$ for $0 \leq k < \frac{1}{2}{n \choose 2}$, which, by Proposition \ref{prop:order-raising} would imply Theorem \ref{thm:main-theorem}.

In Section \ref{sec:sl2-action}, we prove that $U^{{n \choose 2}-2k}$ is invertible by constructing a representation of $\mathfrak{sl}_2$ on $\C W_n$ with weight spaces $\C (W_n)_i$ such that the standard generator $e \in \mathfrak{sl}_2$ acts by $U$ (a result of Proctor \cite{Proctor1982} implies that, if $W_n$ is Peck, then there is \emph{some} such representation in which $e$ acts as an order raising operator). 

\begin{remark}
In subsequent work, Hamaker, Pechenik, Speyer, and Weigandt have interpreted this $\mf{sl}_2$-action in terms of derivatives of Schubert polynomials, allowing them to prove Stanley's conjectured determinant \cite{Hamaker2018}.
\end{remark}

\section{An action of $\mathfrak{sl}_2$} \label{sec:sl2-action}
We define a lowering operator $D:\C W_n\rightarrow \C W_n$ by
$$D\cdot w=\sum_{\substack{1 \leq i < j \leq n \\ \ell(wt_{ij})=\ell(w)-1}} c(w,wt_{ij}) \cdot wt_{ij}$$
with weights $c(w,wt_{ij}) = 2\big(w_i-w_j-a(w,wt_{ij})\big)-1$
where $a(w,wt_{ij}):=\#\{k<i:w_j<w_k<w_i\}.$  Note that the sum in our definition is over all covering relations in the \emph{strong} Bruhat order (the fact that $D$ does not respect the \emph{weak} order will be immaterial to our argument; see \cite{Gaetz2018} for an investigation of the combinatorial implications for the weak and strong orders).  

An alternative description of the weights $c(w,wt_{ij})$ can also be given in terms of Lehmer codes.  The \emph{Lehmer code} $L(w)$ of a permutation $w \in S_n$ is the tuple $(L_1,...,L_n)$ with $L_i:=\# \{j>i \: | \: w_j < w_i\}$; the map $w \mapsto L(w)$ is well known to be a bijection $S_n \to [0,n-1] \times [0,n-2] \times \cdots \times [0,1] \times \{0\}$ (see, for example, Stanley \cite{Stanley2012}, where $L(w)$ is exactly the \emph{inversion table} for $w^{-1}$).  Under the condition $\ell(wt_{ij})=\ell(w)-1$ it is not hard to see that 
\[
c(w,wt_{ij})=\left( L_i(w)-L_i(wt_{ij}) \right) + \left( L_j(wt_{ij})-L_j(w) \right).
\]  
That is, $c(w,wt_{ij})$ is the Manhattan (or $L^1$) distance between the tuples $L(w)$ and $L(wt_{ij})$.

See Figure \ref{fig:labels3} for a depiction of the order raising operator $U$ and the lowering operator $D$ in the case $n=3$.

\begin{figure}
\begin{center}
\begin{tikzpicture} [scale=1.6]
\node[draw,shape=circle,fill=black,scale=0.8] (a0)[label=below:{$123$}] at (0,0) {};
\node[draw,shape=circle,fill=black,scale=0.8](b0)[label=left:{$213$}] at (-1,1) {};
\node[draw,shape=circle,fill=black,scale=0.8](b1)[label=right:{$132$}] at (1,1) {};
\node[draw,shape=circle,fill=black,scale=0.8](c0)[label=left:{$231$}] at (-1,2) {};
\node[draw,shape=circle,fill=black,scale=0.8](c1)[label=right:{$312$}] at (1,2) {};
\node[draw,shape=circle,fill=black,scale=0.8](d0)[label={$321$}] at (0,3) {};

\draw (a0) -- node[left] {$1$} (b0);
\draw (a0) -- node[right] {$2$} (b1);
\draw (b0) -- node[left] {$2$} (c0);
\draw (b1) -- node[right] {$1$} (c1);
\draw (c0) -- node[left] {$1$} (d0);
\draw (c1) -- node[right] {$2$} (d0);
\end{tikzpicture} \hspace{0.2in}
\begin{tikzpicture} [scale=1.6]
\node[draw,shape=circle,fill=black,scale=0.8] (a0)[label=below:{$123$}] at (0,0) {};
\node[draw,shape=circle,fill=black,scale=0.8](b0)[label=left:{$213$}] at (-1,1) {};
\node[draw,shape=circle,fill=black,scale=0.8](b1)[label=right:{$132$}] at (1,1) {};
\node[draw,shape=circle,fill=black,scale=0.8](c0)[label=left:{$231$}] at (-1,2) {};
\node[draw,shape=circle,fill=black,scale=0.8](c1)[label=right:{$312$}] at (1,2) {};
\node[draw,shape=circle,fill=black,scale=0.8](d0)[label={$321$}] at (0,3) {};

\draw (a0) -- node[left] {$1$} (b0);
\draw (a0) -- node[right] {$1$} (b1);
\draw (b0) -- node[left] {$1$} (c0);
\draw (b1) -- node[right] {$3$} (c1);
\draw (b0) -- node[below] {$1$} (c1);
\draw (b1) -- node[above] {$1$} (c0);
\draw (c0) -- node[left] {$1$} (d0);
\draw (c1) -- node[right] {$1$} (d0);
\end{tikzpicture}
\end{center}
\caption{The edge weights for the order raising operator $U$ (left) and the lowering operator $D$ (right).}  
\label{fig:labels3}
\end{figure}
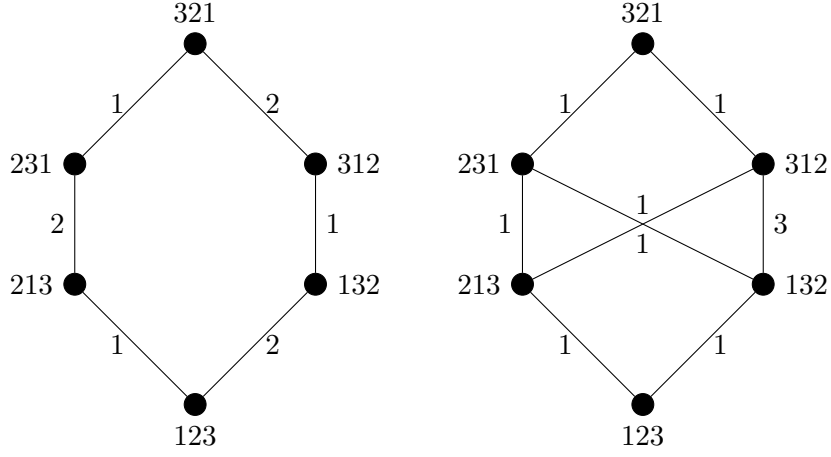

We also define a modified rank function $H: \C W_n \to \C W_n$ by \[
H(w)=\left(2 \cdot \ell(w) - {n \choose 2}\right) \cdot w
\]
for $w \in W_n$ and extending by linearity; this choice is necessiated by the weight theory of $\mf{sl}_2$-representations.  Since $H$ acts as a multiple of the identity on each rank, it can be shown (see Proctor \cite{Proctor1982}) that for \emph{any} raising operator $U$ and lowering operator $D$ we have
\begin{align} \label{eq:weight-relations1}
    HU-UH&=2U  \\
    \label{eq:weight-relations2}
    HD-DH&=-2D.
\end{align}

In this section, we show that $U,D$ together with $H$ provide a representation of $\mathfrak{sl}_2$ on $\C W_n$.  The Lie algebra $\mathfrak{sl}_2(\C)$ has a standard linear basis
\begin{align*}
    e &= \begin{pmatrix} 0 & 1 \\ 0 & 0 \end{pmatrix} \\
    f &= \begin{pmatrix} 0 & 0 \\ 1 & 0 \end{pmatrix} \\
    h &= \begin{pmatrix} 1 & 0 \\ 0 & -1 \end{pmatrix}
\end{align*}
and is determined by the relations $[h,e]=2e$, $[h,f]=-2f$, and $[e,f]=h$.  Here $[,]$ denotes the standard Lie bracket: $[X,Y]:=XY-YX$.

In light of (\ref{eq:weight-relations1}) and (\ref{eq:weight-relations2}), all that remains is to check that $[U,D]=H$.  We can view $[U,D]=UD-DU$ and $H$ as matrices of size $n! \times n!$ with rows and columns indexed by permutations, and we show that they are equal by comparing entries via Lemma~\ref{lem:self} and Lemma~\ref{lem:diamond} below.
\begin{lemma}\label{lem:self}
For every $w\in W_n$, $(UD-DU)_{w,w}=2 \cdot \ell(w)-{n\choose 2}.$
\end{lemma}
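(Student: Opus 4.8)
The plan is to compute the two diagonal entries $(UD)_{w,w}$ and $(DU)_{w,w}$ separately (in terms of the Lehmer code of $w$) and then subtract.

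\textbf{Step 1: reduce to adjacent transpositions.} First I would observe that the only contributions to $(UD)_{w,w}$ come from descending along a strong covering relation $wt_{ij} \lessdot w$ and then re-ascending along the \emph{same} edge: a term of $UD\cdot w$ that is a multiple of $w$ arises from a lowering step $w\mapsto wt_{ij}$ (with $\ell(wt_{ij})=\ell(w)-1$) followed by a raising step $wt_{ij}\mapsto wt_{ij}s_k=w$, which forces $t_{ij}s_k=e$, hence $t_{ij}=s_k$ and in particular $j=i+1$. The coefficient accumulated is $i\cdot c(w,ws_i)$, and $\ell(ws_i)=\ell(w)-1$ means exactly that $i$ is a descent of $w$. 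By the symmetric argument (ascend via $s_i$ to $ws_i$, then descend back via $t_{i,i+1}$),
\[
(UD)_{w,w}=\sum_{i:\ w_i>w_{i+1}} i\cdot c(w,ws_i),\qquad (DU)_{w,w}=\sum_{i:\ w_i<w_{i+1}} i\cdot c(ws_i,w).
\]

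\textbf{Step 2: evaluate the edge weights via Lehmer codes.} This is the heart of the matter. Since $w$ and $ws_i$ differ only by swapping the entries in positions $i$ and $i+1$, their Lehmer codes agree outside coordinates $i,i+1$, and I would use the Lehmer-code (Manhattan-distance) description of $c$ recorded above to compute the two changed coordinates directly from one-line notation. In the descent case ($w_i>w_{i+1}$), writing $b=\#\{k>i+1: w_{i+1}<w_k<w_i\}$, one gets $L_i(w)-L_i(ws_i)=1+b$ and $L_{i+1}(ws_i)-L_{i+1}(w)=b$, together with the identity $b=L_i(w)-L_{i+1}(w)-1$, so $c(w,ws_i)=1+2b=2L_i(w)-2L_{i+1}(w)-1$. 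The parallel computation in the ascent case, now applied to the cover $w\lessdot ws_i$, yields $c(ws_i,w)=-\big(2L_i(w)-2L_{i+1}(w)-1\big)$. Substituting into Step 1, the index $i$ contributes $i\big(2L_i(w)-2L_{i+1}(w)-1\big)$ to $(UD-DU)_{w,w}$ \emph{regardless} of whether $i$ is an ascent or a descent, so
\[
(UD-DU)_{w,w}=\sum_{i=1}^{n-1} i\big(2L_i(w)-2L_{i+1}(w)-1\big).
\]

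\textbf{Step 3: telescope.} Since $\sum_{i=1}^{n-1} i=\binom{n}{2}$, it remains to check that $\sum_{i=1}^{n-1} i\big(L_i(w)-L_{i+1}(w)\big)=\ell(w)$. By summation by parts, using $L_n(w)=0$, the left side collapses to $\sum_{i=1}^{n-1}L_i(w)=\sum_{i=1}^{n}L_i(w)=\ell(w)$, the last equality being the standard fact that the Lehmer code entries sum to the length. Hence $(UD-DU)_{w,w}=2\ell(w)-\binom{n}{2}$.

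\textbf{Expected main obstacle.} Steps 1 and 3 are essentially formal — the first once one notices that $t_{ij}s_k=e$ forces $j=i+1$, the third a one-line Abel summation — so the only real work is the bookkeeping in Step 2: carefully tracking the two relevant Lehmer-code coordinates through the transposition in both the ascent and the descent cases, and verifying that the two resulting edge-weight formulas assemble into the single clean expression $i\big(2L_i(w)-2L_{i+1}(w)-1\big)$.
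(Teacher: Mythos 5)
Your proof is correct, and its computational core is genuinely different from the paper's. Both arguments begin the same way: the only diagonal contributions come from an adjacent transposition paired with the matching simple transposition, so $(UD)_{w,w}$ and $(DU)_{w,w}$ are sums of $i\cdot c$ over descents and ascents respectively. From there the paper works directly with the original one-line-notation definition $c(w,ws_i)=2\bigl(w_i-w_{i+1}-a(w,ws_i)\bigr)-1$, combines the two sums, switches the order of summation, and analyzes the sign-change pattern of $w_k-w_j$ along each row to telescope the resulting expression into inversion counts plus boundary terms $\pm n$ --- a somewhat delicate bookkeeping argument. You instead pass to the Lehmer-code (Manhattan-distance) description of the edge weights, obtaining the uniform per-index contribution $i\bigl(2L_i(w)-2L_{i+1}(w)-1\bigr)$ in both the ascent and descent cases, after which a one-line Abel summation together with $\sum_i L_i(w)=\ell(w)$ finishes the proof; your identities $L_i(w)-L_i(ws_i)=1+b$, $L_{i+1}(ws_i)-L_{i+1}(w)=b$, and $b=L_i(w)-L_{i+1}(w)-1$ all check out, as does the sign flip in the ascent case. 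The trade-off: your route is shorter and conceptually cleaner (the ascent/descent dichotomy disappears immediately), but it leans on the Manhattan-distance formula for $c$, which the paper records only with the remark that ``it is not hard to see''; to be fully self-contained you should include the one-line verification for adjacent swaps, namely that $w_i-w_{i+1}-1=a(w,ws_i)+b$, so that $2\bigl(w_i-w_{i+1}-a\bigr)-1=1+2b$, which reconciles your weights with the operator's defining formula. The paper's proof, by contrast, never needs the Lehmer-code reformulation at all.
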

\begin{proof}
Assume $w\in (W_n)_k$, meaning $\ell(w)=k$. We have that, by definition,
\begin{align*}
UD_{w,w}=&\sum_{u\in (W_n)_{k-1}}D_{u,w}\cdot U_{w,u}=\sum_{u\lessdot_{W_n}w}D_{u,w}\cdot U_{w,u}\\
=&\sum_{i:\ w_i>w_{i+1}}i\cdot\Big(2\big(w_i-w_{i+1}-a(w,ws_i)\big)-1\Big)\\
=&\sum_{i:\ w_i>w_{i+1}}2i(w_i-w_{i+1}) \\& +\sum_{i:\ w_i>w_{i+1}}\big(-2i\cdot\#\{j<i:w_{i+1}<w_j<w_i\}-i\big),
\end{align*}
where $\lessdot_{W_n}$ denotes the covering relations in the weak order $W_n$.
Similarly,
\begin{align*}
-DU_{w,w}=&\sum_{i:\ w_i<w_{i+1}}2i(w_i-w_{i+1}) \\ &+\sum_{i:\ w_i<w_{i+1}}\Big(2i\cdot\#\{j<i:w_i<w_j<w_{i+1}\}+i\Big).
\end{align*}
Putting them together, we obtain
\begin{align*}
(UD-DU)_{w,w}=&\sum_i 2i(w_i-w_{i+1})+A\\
=&2(w_1-w_2)+\cdots+(2n-2)(w_{n-1}-w_n)+A\\
=&n^2+n-2nw_n+A,
\end{align*}
where by switching the order of summation,  we can write $A$ as a sum over $j$'s instead of $i$'s as above:
\[
A=\sum_{j=1}^{n-1}\left(\sgn(w_{j+1}-w_j)\cdot j-2 \left( \sum_{\substack{i:\ i>j\\w_{i+1}<w_j<w_i}}i\right)+2 \left( \sum_{\substack{i:\ i>j\\w_{i}<w_j<w_{i+1}}}i\right)\right).
\]
Here $\sgn:\R^{\times}\rightarrow\{\pm1\}$ is the sign function. Let $A_j$ denote the above summand, so that $A=\sum_{j=1}^{n-1}A_j$.

Assume first that $w_j<w_{j+1}$ so that $\sgn(w_{j+1}-w_j)=1$. Let $j<j_1<j_2<\cdots<j_p$ be all the indices such that $w_{j_m}-w_j$ and $w_{j_m+1}-w_j$ have different signs, for $m\geq1$. Since we have assumed that $w_j<w_{j+1}$, we know $w_j<w_{j_1}$, $w_j>w_{j_1+1}$, $w_j>w_{j_2}$, $w_{j}<w_{j_2+1}$ and so on. As a result,
\begin{align*}
A_j=&j-2(j_1+j_3+\cdots)+2(j_2+j_4+\cdots)\\
=&-(j_1-j)+(j_2-j_1)-(j_3-j_2)+\cdots\pm j_p\\
=&-(j_1-j)+(j_2-j_1)-(j_3-j_2)+\cdots\pm (n-j_p)\pm n\\
=&\#\{i>j:w_i<w_j\}-\#\{i>j:w_i>w_j\}\pm n,
\end{align*}
where the last sign is $+$ if $w_j<w_n$ and is $-$ if $w_j>w_n$. The case $w_j>w_{j+1}$ yields the exact same formula with the same argument.

Once we consider all the $A_j$'s together, the last terms $\pm n$ will appear as $+n$ for $w_n-1$ times and will appear as $-n$ for $n-w_n$ times. Therefore,
\begin{align*}
(UD-DU)_{w,w}=&n^2+n-2nw_n+\sum_{j=1}^{n-1}A_j\\
=&n^2+n-2nw_n+\#\{i>j:w_i<w_j\}\\
&-\#\{i>j:w_i>w_j\}+n(w_n-1)-n(n-w_n)\\
=&k-\left({n\choose2}-k\right)=2k-{n\choose 2}.
\end{align*}
\end{proof}

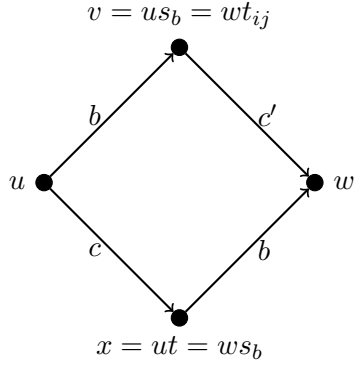
\begin{figure}
\begin{center}
\begin{tikzpicture} [scale=1.8]
\node[draw,shape=circle,fill=black,scale=0.6] (c0)[label=above:{$v=us_b=wt_{ij}$}] at (0,2) {};
\node[draw,shape=circle,fill=black,scale=0.6](b0)[label=left:{$u$}] at (-1,1) {};
\node[draw,shape=circle,fill=black,scale=0.6](b1)[label=right:{$w$}] at (1,1) {};
\node[draw,shape=circle,fill=black,scale=0.6](a0)[label=below:{$x=ut=ws_ b$}] at (0,0) {};

\draw[->] (b0) edge[thick] node[left] {$c$} (a0);
\draw[->] (a0) edge[thick] node[right] {$b$} (b1);
\draw[->] (b0) edge[thick] node[left] {$b$} (c0);
\draw[->] (c0) edge[thick] node[right] {$c'$} (b1);
\end{tikzpicture} 
\end{center}
\caption{The paths under consideration in the proof of Lemma \ref{lem:diamond}.  The transposition $t$ varies from case to case, however the weights $c$ and $c'$ are always equal.}  
\label{fig:diamond-lemma}
\end{figure}

\begin{lemma}\label{lem:diamond}
For $w\neq u\in W_n,(UD-DU)_{w,u}=0.$
\end{lemma}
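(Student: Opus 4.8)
The plan is to show that the two $n! \times n!$ matrices $UD$ and $DU$ agree away from the diagonal. Fix distinct $w, u \in W_n$. Since $UD$ and $DU$ preserve each rank space $\C(W_n)_k$, the entry $(UD-DU)_{w,u}$ is automatically $0$ unless $\ell(w)=\ell(u)$, so assume $\ell(w)=\ell(u)=k$. Expanding the products, $(UD)_{w,u}=\sum_z U_{w,z}D_{z,u}$, where a summand can be nonzero only if $z$ is covered by $u$ in the strong order (so $D_{z,u}=c(u,z)$) and by $w$ in the weak order (so $U_{w,z}\neq 0$). Writing such a $z$ as $z=ws_b$, where $b$ is the unique index with $ws_b=z$ and $\ell(ws_b)=\ell(w)-1$, and noting $U_{w,z}=b$, we may reindex both products by $b$:
\[
(UD)_{w,u}=\sum_b \chi_b\, b\, c(u,ws_b), \qquad (DU)_{w,u}=\sum_b \chi_b'\, b\, c(us_b,w),
\]
where $\chi_b=1$ iff $\ell(ws_b)=\ell(w)-1$ and $ws_b\lessdot u$ in the strong order, and $\chi_b'=1$ iff $\ell(us_b)=\ell(u)+1$ and $w\lessdot us_b$ in the strong order; the second identity is obtained symmetrically, now with the intermediate element $z'=us_b$ lying above both $u$ and $w$. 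When $\chi_b=1$ the four elements $ws_b,u,w,us_b$ form the diamond of Figure~\ref{fig:diamond-lemma} (with $x=ws_b$, $v=us_b$). Hence it suffices to prove: \textbf{(A)} $\chi_b=\chi_b'$ for every $b$; and \textbf{(B)} $c(u,ws_b)=c(us_b,w)$ whenever $\chi_b=1$. These give the two sums above equal term by term.

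For \textbf{(A)} it is enough to prove $\chi_b=1\Rightarrow\chi_b'=1$, since the reverse implication results from applying the involution $x\mapsto w_0 x$ — an anti-automorphism of both the weak and the strong orders — which, after replacing $(u,w)$ by $(w_0 w,\, w_0 u)$, swaps the defining conditions of $\chi_b$ and $\chi_b'$. So assume $\chi_b=1$ and set $z:=ws_b$; then $z\lessdot u$ strongly, say $z=ut_{ij}$ with $i<j$, so $u_i>u_j$ and no $m$ with $i<m<j$ has $u_j<u_m<u_i$, and also $\ell(z)=\ell(w)-1$, i.e.\ $z_b<z_{b+1}$. A case analysis on the location of $\{b,b+1\}$ relative to $\{i,j\}$, using the inequality $z_b<z_{b+1}$ and the cover condition on $z\lessdot u$, gives $u_b<u_{b+1}$ in every case, hence $\ell(us_b)=\ell(u)+1$; and since $w=(us_b)(s_b t_{ij}s_b)$ expresses $w$ as $us_b$ right-multiplied by a transposition, with $\ell(w)=\ell(u)=\ell(us_b)-1$, we get $w\lessdot us_b$ strongly, i.e.\ $\chi_b'=1$.

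For \textbf{(B)}, with $z=ws_b=ut_{ij}$ and $z'=us_b$ as above, $w=z' t_{i'j'}$ where $\{i',j'\}$ is the image of $\{i,j\}$ under the index-swap $(b\ b+1)$, reordered so $i'<j'$. By the formula $c(x,xt_{pq})=2\big(x_p-x_q-a(x,xt_{pq})\big)-1$ with $a(x,xt_{pq})=\#\{m<p:x_q<x_m<x_p\}$, the claim $c(u,z)=c(z',w)$ is the identity
\[
u_i-u_j-a(u,z)=z'_{i'}-z'_{j'}-a(z',w).
\]
Because $z'$ differs from $u$ only by transposing the entries in positions $b$ and $b+1$, in most cases $z'_{i'}-z'_{j'}=u_i-u_j$ and the statistics $a(u,z)$ and $a(z',w)$ count matching sets; in the cases where $\{b,b+1\}$ meets $\{i,j\}$ the difference $a(u,z)-a(z',w)$ collapses to a single Iverson bracket, which the inequality $z_b<z_{b+1}$ and the strong-cover condition on $z\lessdot u$ force to be $0$. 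Working through this finite case list proves \textbf{(B)}, and hence the lemma. The main obstacle is precisely this bookkeeping: the boundary cases, where $b$ equals or is adjacent to $i$ or $j$, are where the naive cancellation breaks and one must feed in the combinatorial constraints defining the diamond.
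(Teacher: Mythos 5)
Your proposal is correct and takes essentially the same route as the paper: both arguments pair the down-up and up-down paths of each diamond through the same simple-reflection index $b$ (so the weak-edge weights are both $b$) and then verify the equality of the strong-edge weights $c(u,ws_b)=c(us_b,w)$ by the same case analysis on how $\{b,b+1\}$ meets $\{i,j\}$, using $z_b<z_{b+1}$ together with the strong-cover condition. The only cosmetic differences are that you run the cases starting from the down-up path and get the converse existence statement via the $w_0$-symmetry (where the paper simply observes that the reverse direction involves the same cases), and that you leave the finite case list as an outline --- the asserted facts ($u_b<u_{b+1}$ in every case, and the single potentially differing element of the $a$-statistics being excluded by the diamond inequalities) do all check out.
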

\begin{proof}
It suffices to check cases where $(UD)_{w,u}\neq0$ or $(DU)_{w,u}\neq0$. Let $k=\ell(w)=\ell(u)$. Suppose that $(DU)_{w,u}\neq0$, in which case there exists $v=us_b$ with $\ell(v)=\ell(u)+1$ and $v=wt_{ij}$ $(i<j)$ with $\ell(v)=\ell(w)+1$. We view $W_n$ as a directed graph with up edges corresponding to covering relations of the weak Bruhat order $W_n$ and down edges corresponding to covering relations in the strong Bruhat order (see Figure~\ref{fig:diamond-lemma}). Several cases are considered below; in each case there are exactly two directed paths of length two from $u$ to $w$: one going up-down and the other down-up. The edge weights of these two paths always agree and thus give $(UD-DU)_{w,u}=0$.

\textbf{Case 1:} $\{b,b+1\}\cap\{i,j\}=\emptyset$. It is clear that there are exactly two directed paths from $u$ to $w$ of length 2, which are $u\rightarrow v\rightarrow w$ and $u\rightarrow x=ut_{ij}\rightarrow w$. By definition, $U_{v,u}=U_{w,ut_{ij}}=b$ and $D_{w,v}=D_{ut_{ij},u}$.

\textbf{Case 2:} $b=i$. As $w\neq u$, we must have $j>b+1$. By our condition on the path $u\rightarrow v\rightarrow w$, we know that $u_b<u_j<u_{b+1}$ and therefore there exists one more path of length 2 from $u$ to $w$, which is $u\rightarrow x\rightarrow w$ where $x=ut_{i+1,j}=ws_b$. The up edges of these two paths both have weight $b$ and for the down edges, $D_{w,v}=2(w_j-w_i-a(v,w))-1$ and $D_{x,u}=2(u_{i+1}-u_j-a(u,x))-1$. We have $w_j=u_{i+1}$ and $w_i=u_j$ and since $u_i$ is less than both $u_{i+1}$ and $u_j$, we conclude that $a(v,w)=a(u,x)$.

\textbf{Case 3:} $b+1=i$. Similarly, we know that $u_j<u_b<u_{i}=u_{b+1}$ and the only other directed path is $u\rightarrow x\rightarrow w$ with $x=ut_{b,j}=ws_b$. The up edges both have weight $b$; for the down edges the key parameters $a(v,w)$ and $a(u,x)$ are equal, since $u_{b+1}$ is greater than both $u_b$ and $u_j$.

\textbf{Case 4:} $b=j$. Here $u_b=u_j<u_{b+1}<u_i$ and the other path is $u\rightarrow x\rightarrow w$ with $x=ut_{i,j+1}=ws_b$. The up edges have the same weight $b$ and the down edges have the same weight since the transpositions $w=vt_{i,j}$ and $x=ut_{i,j+1}$ both swap entries with the same values, and all four permutations have the same values at indices $1,2,\ldots,i-1$.

\textbf{Case 5:} $b+1=j$. As $w\neq u$, we know $i<b$. First, $u_b<u_{b+1}$ and since $w=vt_{ij}$ with $\ell(w)=\ell(v)-1$, we must have $u_b<u_i<u_j=u_{b+1}$. The other path is $u\rightarrow x\rightarrow w$ with $x=ut_{i,j-1}=ws_b$. The up edges have the same weight $b$ and the down edges have the same weight since the transpositions $w=vt_{i,j}$ and $x=ut_{i,j-1}$ both swap entries with the same values, and all four permutations have the same values at indices $1,2,\ldots,i-1$.

Beginning instead with the assumption that  $(UD)_{w,u}\neq 0$, it is easy to see that all cases are already included above.
\end{proof}

We now complete the proof of the main theorem, which follows from Theorem 1 of Proctor \cite{Proctor1982}, given Lemmas \ref{lem:self} and \ref{lem:diamond}. We include the proof here because it is so simple.

\begin{proof}[Proof of Theorem \ref{thm:main-theorem}]
Lemma \ref{lem:self} and Lemma \ref{lem:diamond} together show that the map sending $e \mapsto U$, $f \mapsto D$, and $h \mapsto H$ defines a representation of $\mathfrak{sl}_2(\C)$ on $\C W_n$ with weight spaces $\C (W_n)_k$ of weight $2k-{n \choose 2}$.  It is an immediate consequence of the theory of highest weight representations (see, for example, Theorem 4.60 of \cite{Kirillov2008}) that $U^{{n \choose 2}-2k}: \C (W_n)_k \to \C (W_n)_{{n \choose 2}-k}$ is an isomorphism.  Since $U$ is an order raising operator by definition, Proposition \ref{prop:order-raising} implies the desired result.
\end{proof}

\section{Other Coxeter types} \label{sec:other-types}

The weak and strong Bruhat orders generalize naturally to any finite Coxeter group $C$, with the role of the simple transpositions $(i \: i+1)$ replaced by any choice of simple reflections, and the set of all transpositions $(i \: j)$ replaced by the set of all reflections in $C$.  Stanley's result \cite{Stanley1980} that the strong order is strongly Sperner applies to any finite Weyl group.  An easy argument proves the same for the dihedral groups, and computer checks verify that the strong orders on the exceptional Coxeter groups of types $H_3$ and $H_4$ are also strongly Sperner.  Since strong orders for all Coxeter types are known to be rank-symmetric and rank-unimodal, and since products of Peck posets are known to be Peck \cite{Proctor1980}, it follows that Stanley's result can be extended to all finite Coxeter groups.  Our results for the weak order apply only to the symmetric group, however we conjecture that they can be extended to all finite Coxeter groups:

\begin{conj}
The weak order on any finite Coxeter group strongly Sperner.
\end{conj}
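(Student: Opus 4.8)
We sketch a possible route to this conjecture, parallel to the proof of Theorem~\ref{thm:main-theorem}. Since the length function is additive across a direct product decomposition, the weak order on $C=C_1\times C_2$ is the Cartesian product of the weak orders on $C_1$ and $C_2$; as the weak order has the same rank numbers as the strong order and hence, by the discussion in Section~\ref{sec:other-types}, is rank-symmetric and rank-unimodal in every type, and since products of Peck posets are Peck \cite{Proctor1980}, it suffices to treat irreducible $C$. For such $C$, with simple reflections $S$, set of reflections $T$, and longest element $w_0$, one takes the order-raising operator
\[
U\cdot w=\sum_{\substack{s\in S\\ \ell(ws)=\ell(w)+1}}c_s\cdot ws
\]
for suitable positive integers $c_s$, the modified rank operator $H\cdot w=\big(2\ell(w)-\ell(w_0)\big)\cdot w$, and seeks a lowering operator
\[
D\cdot w=\sum_{\substack{t\in T\\ \ell(wt)=\ell(w)-1}}c(w,wt)\cdot wt
\]
supported on strong-order covers. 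By Proctor's observation that \eqref{eq:weight-relations1} and \eqref{eq:weight-relations2} hold automatically for any such $U$, $D$, $H$, the assignment $e\mapsto U$, $f\mapsto D$, $h\mapsto H$ will define a representation of $\mathfrak{sl}_2(\C)$ whose weight spaces are the rank spaces of the weak order on $C$, as soon as $[U,D]=H$; the theory of highest-weight representations then forces $U^{\ell(w_0)-2k}$ to be an isomorphism from rank $k$ to rank $\ell(w_0)-k$, and Proposition~\ref{prop:order-raising} gives the strong Sperner property.

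The crux is producing weights $c_s$ and $c(w,wt)$ for which $[U,D]=H$, and I would look for a uniform formula in root-system language generalizing the type-$A$ recipe. Write $\mathrm{Inv}(w)=\{\beta\in\Phi^+:w(\beta)\in\Phi^-\}$ for the inversion set, so that $\ell(w)=|\mathrm{Inv}(w)|$, and write a strong cover as $w\gtrdot ws_\alpha$ for a positive root $\alpha$ (so $w(\alpha)\in\Phi^-$). In type $A_{n-1}$ one has $w_i-w_j=\mathrm{ht}({-}w(\alpha))$ when $t_{ij}=s_\alpha$, which suggests a candidate of the form
\[
c(w,ws_\alpha)=2\big(\mathrm{ht}({-}w(\alpha))-a(w,ws_\alpha)\big)-1,
\]
where $a(w,ws_\alpha)$ is a correction term counting certain positive roots $\gamma$ with $\gamma+\alpha\in\Phi^+$, generalizing $\#\{k<i:w_j<w_k<w_i\}$; equivalently, one may hope for an $L^1$-distance description in terms of a lattice encoding of inversion sets, mirroring the Lehmer-code reformulation. (The Schubert-polynomial interpretation of \cite{Hamaker2018} does not obviously survive outside type $A$, so a root-theoretic formulation seems to be the right setting.) The coefficients $c_s$, equal to $1,2,\ldots,n-1$ in type $A_{n-1}$, should then be pinned down by the diagonal part of the commutator.

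Verifying $[U,D]=H$ splits, exactly as in Lemmas~\ref{lem:self} and~\ref{lem:diamond}, into a diagonal and an off-diagonal part. For the diagonal entries one computes $(UD-DU)_{w,w}$ as a sum over the right ascents and right descents of $w$ and rearranges it, as in the analysis of the quantity $A$ in the proof of Lemma~\ref{lem:self}, into a telescoping expression in the inversions of $w$, from which the value $2\ell(w)-\ell(w_0)$ should emerge via $\ell(w)=|\mathrm{Inv}(w)|$. For the off-diagonal entries, given $w\neq u$ of equal rank, one shows that each length-two path $u\to us=wt\to w$ (up in the weak order, then down in the strong order) is matched by a unique path $u\to ut'=ws\to w$ (down then up) carrying the same product of edge weights; this generalizes the five-case analysis of Lemma~\ref{lem:diamond}, now organized according to the relative position of the simple reflection $s$ and the cover reflections $t$, $t'$ within the root system.

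The main obstacle is constructing the weight function in the non-crystallographic types. Each ingredient of the type-$A$ formula has a plausible root-theoretic avatar, but it is far from clear that any single closed formula makes the off-diagonal identity hold simultaneously in $H_3$, $H_4$, and every dihedral group $I_2(m)$, so a type-by-type construction --- treating $I_2(m)$ uniformly, reusing Theorem~\ref{thm:main-theorem} in type $A$, and handling the remaining classical and exceptional types separately, by computer where necessary --- may be the realistic path. A related subtlety is that the compensating reflection $t'$, and the inversions ``passed over'' in the diamond argument, need not lie in the parabolic subgroup generated by the supports of $u$ and $w$, so the off-diagonal check cannot simply be localized to small-rank parabolics; it must instead be carried out using global properties of inversion sets.
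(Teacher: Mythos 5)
This statement is a conjecture in the paper: the authors do not prove it, and only record that it holds for dihedral groups by an easy argument and for all Coxeter groups of rank at most four by computer. Your proposal likewise does not prove it. The reduction to irreducible groups and the overall $\mathfrak{sl}_2$-strategy (find $U$, $D$, $H$ with $[U,D]=H$, invoke highest-weight theory and Proposition~\ref{prop:order-raising}) are sound and faithfully mirror the proof of Theorem~\ref{thm:main-theorem}, but the entire content of that strategy lies in actually producing the weights $c_s$ and $c(w,ws_\alpha)$ and verifying the commutator identity, and this is exactly what you leave open. Your candidate formula $c(w,ws_\alpha)=2\bigl(\mathrm{ht}(-w(\alpha))-a(w,ws_\alpha)\bigr)-1$ is not fully specified (the correction term $a$ is only described as ``counting certain positive roots''), the coefficients $c_s$ are not determined, and neither the diagonal identity of Lemma~\ref{lem:self} nor the diamond-matching of Lemma~\ref{lem:diamond} is verified for it in any type beyond $A_{n-1}$ --- indeed you yourself note it is ``far from clear'' that any single formula works in $H_3$, $H_4$, and $I_2(m)$. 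The proposed fallback of type-by-type and computer verification is also not carried out, and computer checks could not in any case dispose of the infinite families $B_n$ and $D_n$, where a genuine uniform construction and proof would still be needed.

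Two further cautions about the plan itself. First, Proctor's result that a Peck poset carries \emph{some} $\mathfrak{sl}_2$-action with $e$ acting as an order raising operator cannot be used as evidence that a lowering operator supported on \emph{strong-order covers} with nonnegative integer weights exists in other types; that support condition is a special feature of the type-$A$ construction, not something guaranteed a priori, and assuming it may be assuming too much. Second, while relations \eqref{eq:weight-relations1} and \eqref{eq:weight-relations2} do hold automatically for any raising and lowering operators (as in the paper), everything hinges on $[U,D]=H$, which in type $A$ required the delicate telescoping computation of Lemma~\ref{lem:self} and the exact matching of edge weights in the five cases of Lemma~\ref{lem:diamond}; nothing in your sketch shows that analogous cancellations occur for any root-theoretic weight function outside type $A$. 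As it stands, your text is a plausible research program consistent with the paper's point of view, not a proof of the conjecture.
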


An easy argument proves the Conjecture for the dihedral groups, and computer checks have also verified it for all Coxeter groups of rank at most four.

\section*{Acknowledgements}
The authors wish to thank Richard Stanley for teaching them about posets, and for helpful discussions about this problem.

\bibliographystyle{plain}
\bibliography{arxiv-v2}

\begin{thebibliography}{10}

\bibitem{Bjorner1984}
Anders {Bj\"orner}.
\newblock Orderings of {C}oxeter groups.
\newblock In Curtis {Greene}, editor, {\em Combinatorics and Algebra}. American
  Mathematical Society, 1984.

\bibitem{extended-abstract}
Christian {Gaetz} and Yibo {Gao}.
\newblock The weak {B}ruhat order on the symmetric group is {S}perner.
\newblock {\em {S}\'eminaire {L}otharingien de {C}ombinatoire}, 82B:Article
  \#35, 2019.

\bibitem{Gaetz2018}
Christian {Gaetz} and Yibo {Gao}.
\newblock {A combinatorial duality between the weak and strong Bruhat orders}.
\newblock {\em J. Combinatorial Theory, Series A}, 171, 2020.

\bibitem{Hamaker2018}
Zachary {Hamaker}, Oliver {Pechenik}, David~E {Speyer}, and Anna {Weigandt}.
\newblock {Derivatives of Schubert polynomials and proof of a determinant
  conjecture of Stanley}.
\newblock 2018.
\newblock arXiv:1812.00321 [math.CO].

\bibitem{Kirillov2008}
Alexander Kirillov, Jr.
\newblock {\em An introduction to {L}ie groups and {L}ie algebras}, volume 113
  of {\em Cambridge Studies in Advanced Mathematics}.
\newblock Cambridge University Press, Cambridge, 2008.

\bibitem{Proctor1982}
Robert~A. Proctor.
\newblock Representations of $\mathfrak{sl}_2$ on posets and the {S}perner
  property.
\newblock {\em SIAM J. Algebraic Discrete Methods}, 3(2):275--280, 1982.

\bibitem{Proctor1980}
Robert~A. Proctor, Michael~E. Saks, and Dean~G. Sturtevant.
\newblock Product partial orders with the {S}perner property.
\newblock {\em Discrete Math.}, 30(2):173--180, 1980.

\bibitem{Stanley2017}
R.~P. {Stanley}.
\newblock {Some Schubert shenanigans}.
\newblock 2017.
\newblock arXiv:1704.00851 [math.CO].

\bibitem{Stanley1980}
Richard~P. Stanley.
\newblock Weyl groups, the hard {L}efschetz theorem, and the {S}perner
  property.
\newblock {\em SIAM J. Algebraic Discrete Methods}, 1(2):168--184, 1980.

\bibitem{Stanley2012}
Richard~P. Stanley.
\newblock {\em Enumerative combinatorics. {V}olume 1}, volume~49 of {\em
  Cambridge Studies in Advanced Mathematics}.
\newblock Cambridge University Press, Cambridge, second edition, 2012.

\end{thebibliography}
\end{document}